\pdfoutput=1
\documentclass[journal,onecolumn]{IEEEtran}



\usepackage{longtable}
\usepackage{makecell}
\usepackage{mdwlist}
\usepackage{amssymb} 
\usepackage{amsthm}
\usepackage{array}
\usepackage{amsmath}
\usepackage{enumerate}
\usepackage{longtable}
\usepackage{graphicx}
\usepackage{galois}
\usepackage[hidelinks,linkcolor=black,bookmarks,bookmarksopen,bookmarksdepth=2]{hyperref}
\usepackage[justification=centering]{caption}
\usepackage[numbers, sort&compress]{natbib}

\makeatletter

\newcommand{\Z}{\mathbb{Z}}

\newcommand{\F}{\mathbb{F}}
\newcommand{\N}{\mathbb{N}}

\DeclareMathOperator{\Ima}{Im}
\DeclareMathOperator{\lcm}{lcm}

\makeatother

\begin{document}
\renewcommand\thesection{\arabic{section}}
\renewcommand\thetable{\arabic{table}}

\theoremstyle{plain}
\newtheorem{thms}{Theorem}[section]
\newtheorem{prop}[thms]{Proposition}
\newtheorem{lemma}[thms]{Lemma}
\newtheorem{fact}[thms]{Fact}
\newtheorem{conc}[thms]{Corollary}
\newtheorem{conjecture}[thms]{Conjecture}

\theoremstyle{definition}
\newtheorem{definition}{Definition}
\newtheorem{example}[thms]{Example}

\theoremstyle{remark}
\newtheorem{remark}[thms]{Remark}

\title{Some Zero-Difference Functions Over $\mathbb{Z}_n$ Using Cyclotomies}
\author{Zongxiang~Yi
\thanks{Z. Yi is with the College of Mathematics and Information Science, Guangzhou University, Guangzhou, Guangdong 510006, P.R. China e-mail:tpu01yzx@gmail.com}
}
\maketitle


\begin{abstract}
A generic method to construct zero-difference functions (ZDFs) on algebraic rings is proposed in this paper. Then this method is used over some rings $\Z_{p^k}$, where $p$ is a prime number and $k\ge 2$ is a positive integer, and for some other special rings.
\end{abstract}
\begin{IEEEkeywords}
Constant Composition Code, Difference System of Sets, Constant Weight Code, Frequency-Hopping Sequence, Zero-Difference Balanced Function
\end{IEEEkeywords}
\IEEEpeerreviewmaketitle

\section{Introduction}
Let $(A,+)$ and $(B, +)$ be two finite Abelian groups. A function from $A$ to $B$ is an $(n,m,\lambda)$ zero-difference balanced function (ZDBF), if there exists a constant number $\lambda$ such that for any nonzero $a \in A$,
$$|\{x \in A \mid f(x+a)-f(x)=0 \}|=\lambda,$$
where $n=|A|$ and $m=|\Ima(f)|$. \citeauthor{carlet2004highly} first proposed the concept of ZDBF in \citeyear{carlet2004highly} \cite{carlet2004highly}. Some optimal objects, such as constant composition codes (CCC), constant weight codes (CWC), difference systems of sets (DSS) and frequency-hopping sequences (FHS), can be obtained by ZDBFs. Therefore, many researchers have been working on constructing ZDBFs (see \cite{ding2009optimal, ding2008optimal, carlet2004highly, zhou2012some, wang2014sets, cai2013new, ding2014three, zha2015cyclotomic, zhifan2015zero, cai2017zero, li2017generic, yi2018generic} and the references therein). However \citeauthor{buratti2019partitioned} recently pointed out that many results on ZDBF were known as partitioned difference family (PDF) \cite{buratti2019partitioned} and proposed the "philosophy" on block sizes. It makes us turn to the problem of generalizing the concept of ZDBF.

In \citeyear{carlet2014quadratic}, \citeauthor{carlet2014quadratic} proposed another concept called differentially $\delta$-vanishing \cite{carlet2014quadratic}. A function from $A$ to $B$ is differentially $\delta$-vanishing, if for any nonzero $a \in A$,
$$1\le |\{x \in A \mid f(x+a)-f(x)=0 \}| \le \delta,$$
where $n=|A|$ and $m=|\Ima(f)|$. Any $(n,m,\lambda)$ ZDBF is differentially $\lambda$-vanishing. But there had been little research on this concept, until \citeauthor{jiang2016generalized} proposed a related concept called generalized zero-difference balanced function in \citeyear{jiang2016generalized} \cite{jiang2016generalized}.

A function from $A$ to $B$ is an $(n, m, S)$ generalized zero-difference balanced function (G-ZDBF), if there exists a constant set $S \subset \N$ such that for any nonzero $a \in A$,
$$|\{x \in A \mid f(x+a)-f(x)=0 \}|\in S,$$
where $n=|A|$ and $m=|\Ima(f)|$. Then some objects can be obtained by ZDFs \cite{jiang2016generalized, Jiang2016New, liu2016some}, but they are not optimal. The reason is that the size of $S$ is too large, which implies they are not really zero-difference balanced. So we refer to use zero-difference function (ZDF) instead of generalized zero-difference balanced function (G-ZDBF). A ZDF is called trivial if it is a ZDBF. Moreover, the set $S$ should have a small size and small differences between two distinct elements. We call these two requirements, the "philosophy" on zero-difference. Other requirements should be impose on $S$ depending on different applications. For example, to make the FHSs meets some theoretic bound, the average of the elements in $S$ should close to some constant number $C(n,m)$.

In this paper we concern those $(n,m,S)$ ZDFs with $|S|=2$ or $|S|=3$. The contributions of this paper are as follows:
\begin{enumerate}[(1)]
\item We propose a generic method to construct ZDFs on algebraic rings;
\item We construct several classes of $(n,m,S)$ ZDFs on residual class rings $\Z_n$ with $S=2$ or $S=3$;
\end{enumerate}

The rest of this paper is organized as follows. In Section \ref{se:ogr}, we show the method to construct ZDFs on algebraic rings. In Section \ref{se:orcr}, we construct ZDFs on residual class rings. In Section \ref{se:con}, we make conclusions.

\section{Construction On Algebraic Ring}\label{se:ogr}

In this paper, we assume that $(R, +, \times)$ is a ring with identity. 
Firstly, we introduce a lemma about partition formed by a subgroup.
\begin{lemma}\cite[pp. 8-10]{eilenberg1974automata}\label{lm:cosets_partition}
Let $(R, +, \times)$ be a ring of order $n$, and let $G$ be a subgroup of $(R, \times)$. Then
$$D_G=\{rG \mid r \in R\}$$
is a partition of $R$, where $rG=\{ rg \mid g \in G\}$ is called the coset of $G$.
\end{lemma}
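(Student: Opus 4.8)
The plan is to check the two defining properties of a partition for $D_G=\{rG\mid r\in R\}$: that the cosets cover $R$, and that any two of them are either equal or disjoint. Throughout, the crucial point to bear in mind is that $(R,\times)$ is merely a monoid, not a group: it may contain zero divisors and non-invertible elements, and the multiplier $r$ ranges over all of $R$, not only over the units. Accordingly, I would invoke invertibility only for the elements of the subgroup $G$, whose identity element I take to be the ring identity $1$, and never for the ambient multipliers $r$ or $s$.

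First I would establish the covering property. Because $G$ is a group with identity $1$, we have $1\in G$, and hence $r=r\times 1\in rG$ for every $r\in R$. Thus every element of $R$ lies in at least one coset, so $\bigcup_{r\in R}rG=R$.

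Next I would show that two intersecting cosets coincide, which together with covering yields the partition. Suppose $rG\cap sG\neq\emptyset$, say $rg_1=sg_2$ with $g_1,g_2\in G$. Multiplying on the right by $g_2^{-1}\in G$ and using associativity together with $g_2g_2^{-1}=1$ gives $s=r(g_1g_2^{-1})$. Setting $h=g_1g_2^{-1}\in G$, I would then argue $(rh)G=r(hG)=rG$, where the identity $r(hG)=(rh)G$ is just associativity and $hG=G$ follows from closure ($hG\subseteq G$) together with $g=h(h^{-1}g)\in hG$ for each $g\in G$ (so $G\subseteq hG$). Hence $sG=rG$, and the reverse inclusion is symmetric.

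The main obstacle is really the only delicate point: one must resist cancelling the ring elements $r$ and $s$ directly, since they need not be invertible, so every cancellation has to be routed through the inverses supplied by $G$. Equivalently, one may phrase the whole argument by saying that $G$ acts on $R$ by right multiplication — a genuine group action precisely because $1\in G$ acts as the identity and $\times$ is associative — so that the cosets $rG$ are exactly its orbits, and orbits always partition the underlying set.
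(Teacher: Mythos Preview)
Your proof is correct. The paper itself does not supply a proof of this lemma; it merely cites \cite[pp.~8--10]{eilenberg1974automata}. Your argument is the standard one: cover $R$ via $r=r\cdot 1\in rG$, and show any two intersecting cosets coincide by routing all cancellations through inverses in $G$ rather than in $R$. The observation that this is precisely the orbit decomposition of the right $G$-action on $R$ is apt and captures why the result holds despite $(R,\times)$ being only a monoid.
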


Given a subgroup $G$ of $(R, \times)$, a partition $D_G$ of $R$ is obtained. Now we define a function $g_G$ from $R$ to $D_G$: $g_G(x)=rG$ where $rG$ contains $x$, and there exists a bijective function $h_G$ from $D_G$ to $\Z_{|D_G|}$. So by function composition, a function $f_G$ from $R$ to $\Z_{|D_G|}$ is obtained, i.e.,
$$f_G(x)=h_G(g_G(x)).$$
The function $f_G(x)$ is called the coset index function induced by $G$.
\begin{remark}Lemma \ref{lm:cosets_partition} guarantees that $g_G$ is well-defined.
\end{remark}
Let $(R, + ,\times)$ be a ring, and let $x$ be an unknown in $R$. Here are some notations:
\begin{enumerate}[(1)]
 \item $S(G, a)$ denotes the union of the solution sets of equations $x(g-1)=a$ for every $g \in G$ and $N(G,a)$ denotes the size of $S(G, a)$;
 \item $d(G)$ denotes the set of possible sizes of all cosets $rG$ when $r$ runs over $R$, and $M(G,a)$ denotes the number of elements $r \in R$ such that the size of the coset $rG$ is $a$;
 \item $R^\times$ denotes the set of all invertible elements in $(R, \times)$ and $R^*$ denotes the set of all nonzero elements in $R$.
\end{enumerate}

Now we propose our method to construct ZDFs on a ring $(R, +, \times)$.
\begin{thms}\label{th:construct_gzdb_on_generic_ring}
Let $(R, +, \times)$ be a ring of order $n$, and let $G$ be a subgroup of $(R, \times)$. $f_G(x)$ denotes the coset index function induced by $G$. Then $f_G(x)$ is an $(n, m, S)$ ZDF from $(R,+)$ to $\Z_{m}$, where
$m=\sum_{a \in d(G)}\frac{M(G,a)}{a}$, $S=\{ N(G, a) \mid a \in R\backslash\{0\}\}$.
\end{thms}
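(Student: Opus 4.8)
The plan is to reduce everything to the combinatorics of the coset partition $D_G$ guaranteed by Lemma \ref{lm:cosets_partition}, exploiting the single structural fact that, exactly as for cosets of a group, \emph{coset membership determines the coset}: if $r' \in rG$ then $r'G = rG$. This holds because $G$ is a group under $\times$, so left translation gives $gG = G$ for every $g \in G$; writing $r' = rg$ yields $r'G = r(gG) = rG$. Since $1 \in G$, one also has $r = r\cdot 1 \in rG$ for every $r$, so each $r$ lies in its own coset. All subsequent steps are consequences of this fact.

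First I would pin down the image size $m = |\Ima(f_G)|$. Because $h_G$ is a bijection $D_G \to \Z_{|D_G|}$ and $g_G$ is surjective, $f_G$ is surjective, so $m = |D_G|$, the number of cosets. To evaluate this via $M(G,a)$ and $d(G)$, I would show that a fixed coset $C$ equals $rG$ for \emph{exactly} the elements $r \in C$: if $rG = C$ then $r \in rG = C$, and conversely any $r \in C$ satisfies $rG = C$ by the structural fact. Consequently the set of $r$ with $|rG| = a$ is precisely the disjoint union of all cosets of size $a$, so $M(G,a) = a\cdot(\#\text{ cosets of size }a)$. Dividing and summing over $a \in d(G)$ then gives $|D_G| = \sum_{a \in d(G)} M(G,a)/a = m$, and also $n = |R|$ as required.

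Second I would compute, for a fixed nonzero $a \in R$, the difference count $|\{x : f_G(x+a) - f_G(x) = 0\}|$. The condition $f_G(x+a) = f_G(x)$ says that $x$ and $x+a$ share a coset, i.e. $(x+a)G = xG$; combining the structural fact with $x+a \in (x+a)G$, this is equivalent to $x+a \in xG$, hence to the existence of $g \in G$ with $x+a = xg$, i.e. $x(g-1) = a$. Therefore the solution set is exactly $\bigcup_{g \in G}\{x : x(g-1) = a\} = S(G,a)$, whose cardinality is $N(G,a)$ by definition. Letting $a$ range over $R \setminus \{0\}$ shows $f_G$ is an $(n,m,S)$ ZDF with $S = \{N(G,a) : a \in R \setminus\{0\}\}$.

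The delicate points, which I would treat carefully, all stem from $R$ being merely a ring rather than a field: distinct $g$ may yield overlapping solution sets of $x(g-1)=a$, so $N(G,a)$ must be read as the size of the \emph{union} $S(G,a)$ and not a sum of individual counts; and zero divisors make coset sizes genuinely vary, which is precisely why the refined identity $M(G,a)=a\cdot(\#\text{ cosets of size }a)$ is needed rather than a uniform $|G|$. The hypothesis that must be kept explicit, both here and for Lemma \ref{lm:cosets_partition} to cover all of $R$, is that $G$ contains the identity $1$ of $R$; without it the translation $r \mapsto rG$ need not satisfy $r \in rG$ and $D_G$ would fail to be a partition of $R$. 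This bookkeeping, rather than any single hard estimate, is the main obstacle.
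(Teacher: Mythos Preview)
Your proof is correct and follows essentially the same route as the paper: both establish $\{x : g_G(x+a)=g_G(x)\}=\bigcup_{g\in G}\{x : x(g-1)=a\}$ by exploiting the coset structure, with your version using $xG$ directly as the common coset where the paper instead introduces an auxiliary representative $r$ with $x=rg_2$, $x+a=rg_1$ and then cancels. You are also more explicit than the paper in deriving the formula $m=\sum_{a\in d(G)} M(G,a)/a$ (the paper simply attributes the second parameter to Lemma~\ref{lm:cosets_partition}), but these are cosmetic rather than substantive differences.
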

\begin{proof}
Consider the three parameters of ZDF. The first parameter is obvious. The second parameter can be obtained by Lemma \ref{lm:cosets_partition}. The following is to show that the third parameter is correct. Since $h_G$ is bijective, we have
$$\{x \in R \mid f_G(x+a)=f_G(x) \} = \{x \in R \mid g_G(x+a)=g_G(x) \}.$$
Then it suffices to show that for $\forall a \in R\backslash\{0\}$,
$$\{x \in R \mid g_G(x+a)=g_G(x) \}=\bigcup_{g \in G }\{x \in R \mid x(g-1)=a \}.$$

On one hand, for $\forall x \in \{x \in R \mid g_G(x+a)=g_G(x) \}$, there exists an element $r \in R$ and two elements $g_1, g_2 \in G$ such that
\begin{equation*}\begin{cases}
x+a = rg_1 \\
x =rg_2
\end{cases}.\end{equation*}
It implies
$$x+a=rg_1=rg_2g_2^{-1}g_1=xg_2^{-1}g_1.$$
Then we have $x(g_2^{-1}g_1 - 1) =a$. Therefore $x \in \bigcup_{g \in G }\{x \in R \mid x(g-1)=a \}$.

On the other hand, for $\forall x \in \bigcup_{g \in G }\{x \in R \mid x(g-1)=a \}$, there exists an element $g \in G$ such that
$$x(g-1)=a.$$
It implies
$$x+a=xg\in xG.$$
Then we have $g_G(x+a)=g_G(x)=xG$. Therefore $x \in \{x \in R \mid g_G(x+a)=g_G(x) \}$.

Finally, we have
$$\{x \in R \mid g_G(x+a)=g_G(x) \}=\bigcup_{g \in G }\{x \in R \mid x(g-1)=a \}.$$
\end{proof}

Note that $1\in G$ and $a \ne 0$. So when $g=1$, $\{x \in R \mid x(g-1)=a \}=\emptyset$. If the group $G$ happens to satisfy the condition
$$(G-1)\backslash\{0\} \subset R^\times, $$
then the function constructed by Theorem \ref{th:construct_gzdb_on_generic_ring} is also a ZDBF. Corollary \ref{con:construct_zdb_on_generic_ring} is almost the same as Theorem 1 in \cite{yi2018generic}. However Corollary \ref{con:construct_zdb_on_generic_ring} does not require that $R$ be commutative.

Let $R$ be residual class ring $\Z_n$ or the product of finite fields $\F_q$, then the ZDBFs in \cite{cai2013new, zha2015cyclotomic, ding2014three} can be retrieved by our method. So Corollary \ref{con:construct_zdb_on_generic_ring} can be viewed as a generalization of those results.
\begin{conc}\label{con:construct_zdb_on_generic_ring}
Let $(R, +, \times)$ be a ring of order $n$, and let $G$ be a subgroup of $(R, \times)$. $f_G(x)$ is the coset index function. If $G$ satisfies the condition
$$(G-1)\backslash\{0\} \subset R^\times, $$
then $f_G(x)$ is an $(n, \frac{n-1}{k}+1, k-1)$ ZDBF from $(R,+)$ to $\Z_{m}$, where $m=\frac{n-1}{k}+1$, $k=|G|$, and $G-1=\{a-1 \mid a \in G\}$
\end{conc}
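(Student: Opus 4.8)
The plan is to read off the conclusion from Theorem~\ref{th:construct_gzdb_on_generic_ring}, which already guarantees that $f_G$ is an $(n,m,S)$ ZDF with $m=\sum_{a\in d(G)}\frac{M(G,a)}{a}$ and $S=\{N(G,a)\mid a\in R\backslash\{0\}\}$; the task is then to evaluate $S$ and $m$ using the extra hypothesis $(G-1)\backslash\{0\}\subset R^\times$. The observation that drives everything is that \emph{any difference of two distinct elements of $G$ is a unit}: for $g_1\neq g_2$ in $G$ I factor $g_1-g_2=(g_1g_2^{-1}-1)g_2$, where $g_1g_2^{-1}\in G\backslash\{1\}$ so $g_1g_2^{-1}-1\in(G-1)\backslash\{0\}\subset R^\times$, while $g_2\in G\subset R^\times$; a product of two units is a unit. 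I would isolate this as the first step.

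With this in hand, I would compute $S$. Fix $a\neq0$. The equation $x(g-1)=a$ has no solution when $g=1$ (it reads $0=a$), while for each of the $k-1$ elements $g\neq1$ the coefficient $g-1$ is a unit, giving the unique solution $x=a(g-1)^{-1}$. These $k-1$ solutions are pairwise distinct: if $x(g_1-1)=x(g_2-1)=a$ with $g_1\neq g_2$, then $x(g_1-g_2)=0$, and since $g_1-g_2$ is a unit we get $x=0$, contradicting $a\neq0$. Hence $N(G,a)=k-1$ for every nonzero $a$, so $S=\{k-1\}$ is a singleton and $f_G$ is a ZDBF with $\lambda=k-1$.

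Finally I would evaluate $m$, the number of cosets in $D_G$. The coset of $0$ is $\{0\}$. For $r\neq0$ the map $g\mapsto rg$ from $G$ onto $rG$ is injective, because $rg_1=rg_2$ yields $r(g_1-g_2)=0$ and hence $r=0$ when $g_1\neq g_2$; moreover $rg\neq0$, since $rg=0$ would force $r=0$. Thus every nonzero coset has size exactly $k$, so the $n-1$ nonzero elements split into $\frac{n-1}{k}$ cosets; adding the singleton $\{0\}$ gives $m=\frac{n-1}{k}+1$. In the notation of Theorem~\ref{th:construct_gzdb_on_generic_ring} this says $d(G)=\{1,k\}$, $M(G,1)=1$, $M(G,k)=n-1$, whence $m=\frac{1}{1}+\frac{n-1}{k}$.

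I expect the only real delicacy to be keeping track of sides in the non-commutative setting---solving $x(g-1)=a$ by right multiplication and factoring $g_1-g_2=(g_1g_2^{-1}-1)g_2$ in that order---since the corollary deliberately drops commutativity. Once the ``distinct group elements differ by a unit'' lemma is established, both the constancy of $N(G,a)$ and the uniformity of the nonzero coset sizes fall out from it by the same short cancellation argument.
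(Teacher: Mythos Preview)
Your proposal is correct and follows the paper's intended route: the paper states the corollary immediately after Theorem~\ref{th:construct_gzdb_on_generic_ring} with only the one-line remark that when $(G-1)\backslash\{0\}\subset R^\times$ each equation $x(g-1)=a$ (for $g\ne1$) has a unique solution, leaving the rest implicit. Your write-up is strictly more complete than the paper's, since you supply the two points the paper omits---that the $k-1$ solutions are pairwise distinct and that every nonzero coset has size $k$---both via the clean observation that $g_1-g_2=(g_1g_2^{-1}-1)g_2$ is a unit whenever $g_1\ne g_2$ in $G$.
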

%
%
%

\section{On Residual Class Ring}\label{se:orcr}
In this section, we consider applying Theorem \ref{th:construct_gzdb_on_generic_ring} to the case that $R=\Z_n$ and $G=\langle e \rangle$. Since the case that $n$ is prime, i.e., $\Z_n$ is a finite field,  is already considered in \cite{ding2014three}, the cases that $n$ is prime power will be discussed in this section.

We remark that by Theorem \ref{th:construct_gzdb_on_generic_ring} the problem of constructing ZDFs comes down to the problem of solving linear equations. So we give two lemmas about solving linear equations in $\Z_n$ before our constructions.
\begin{remark}
Note that $Z_n$ is communicative, so $ax=b$ and $xa=b$ are the same. But we just used to write the linear equation as $ax=b$.
\end{remark}
\begin{lemma} \cite{ireland1990classical}\label{lm:about_solutions_on_zn}
The congruence $ax\equiv b \pmod{n}$ has solutions if and only if $d=\gcd(a,n) \mid b$. If $d\mid b$, then there are exactly $d$ solutions. If $x_0$ is a solution, then the other solutions are given by $x_0+n', x_0+2n', \ldots, x_0+(d-1)n'$, where $n'=\frac{n}{d}$.
\end{lemma}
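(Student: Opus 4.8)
The plan is to translate the congruence into a linear Diophantine equation and then count its residue solutions. First I would observe that $ax \equiv b \pmod{n}$ is solvable in $x$ if and only if there exist integers $x, y$ with $ax - ny = b$. By B\'ezout's identity the set of integers expressible as $ax - ny$ is exactly the set of integer multiples of $d = \gcd(a,n)$, so such $x, y$ exist precisely when $d \mid b$. This settles the solvability criterion and requires nothing beyond the Euclidean algorithm.

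For the count, I would write $a = d a'$ and $n = d n'$, and (when $d \mid b$) $b = d b'$, where by construction $\gcd(a', n') = 1$. Dividing the congruence $ax \equiv b \pmod{n}$ through by $d$ produces the reduced congruence $a' x \equiv b' \pmod{n'}$. Since $a'$ is coprime to $n'$, it is a unit modulo $n'$, so the reduced congruence has a unique solution modulo $n'$, say $x \equiv x_0 \pmod{n'}$.

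Finally I would lift this back to a count modulo $n$. The solutions of the original congruence are exactly those residues modulo $n$ whose reduction modulo $n'$ equals $x_0$. Because $n = d n'$, the residues in $\{0, 1, \ldots, n-1\}$ congruent to $x_0$ modulo $n'$ are precisely $x_0, x_0 + n', x_0 + 2n', \ldots, x_0 + (d-1)n'$, giving exactly $d$ solutions in the stated form. Distinctness holds since consecutive terms differ by $n' < n$ while the largest, $x_0 + (d-1)n'$, stays below $x_0 + n = x_0 + d n'$; exhaustiveness follows from the equivalence established after dividing by $d$.

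The step I expect to be the main obstacle is justifying that passing between $ax \equiv b \pmod{n}$ and $a'x \equiv b' \pmod{n'}$ is a genuine equivalence of solution sets, not merely a one-way implication: dividing a congruence by a common factor $d$ also divides the modulus, so I must argue carefully that a residue $x$ solves the first congruence if and only if its class modulo $n'$ solves the second. Once that equivalence is in hand, the remaining distinctness and exhaustiveness checks are routine counting.
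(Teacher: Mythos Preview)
Your argument is correct and is the standard textbook proof of this classical fact. Note, however, that the paper does not supply its own proof of this lemma: it is stated with a citation to Ireland and Rosen and used as a black box. Your proof via B\'ezout's identity, reduction to the coprime congruence $a'x \equiv b' \pmod{n'}$, and lifting from $\Z_{n'}$ to $\Z_n$ is exactly the argument one finds in that reference, so there is no methodological difference to discuss. The step you flagged as the potential obstacle---the equivalence between $ax \equiv b \pmod{n}$ and $a'x \equiv b' \pmod{n'}$---is indeed the only place requiring care, and your description handles it correctly: $n \mid ax - b$ iff $dn' \mid d(a'x - b')$ iff $n' \mid a'x - b'$.
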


\begin{lemma}\label{lm:about_solution_on_primepower}
Let $p$ be a prime number, and let $k$ be a positive integer. Suppose $i, a \in \Z_{p^k}\backslash\{0\}$. If $\gcd(i,p)=1$ and $p\mid a$, then for any integer $t$ , the solution of the congruence $(tp^{k-1}-i)x\equiv a \pmod{p^k}$ is independent of $t$ .
\end{lemma}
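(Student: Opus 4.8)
The plan is to first show that the coefficient $tp^{k-1}-i$ is a unit modulo $p^k$ for every integer $t$, so that Lemma \ref{lm:about_solutions_on_zn} guarantees a \emph{unique} solution for each $t$; then it suffices to produce a single solution that does not depend on $t$, since uniqueness forces the (lone) solutions for different values of $t$ to coincide.

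First I would compute $\gcd(tp^{k-1}-i,\, p^k)$. Because $\gcd(i,p)=1$ and $k\ge 1$, reducing modulo $p$ gives $tp^{k-1}-i \equiv -i \not\equiv 0 \pmod{p}$, so $p \nmid (tp^{k-1}-i)$ and hence $\gcd(tp^{k-1}-i,\, p^k)=1$. Since $1 \mid a$ trivially, Lemma \ref{lm:about_solutions_on_zn} tells us the congruence has exactly one solution for each fixed $t$.

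Next I would exhibit a candidate solution by examining the case $t=0$. There the congruence reads $-ix \equiv a \pmod{p^k}$; since $\gcd(i,p^k)=1$, the element $i$ is invertible and we may set $x_0 \equiv -i^{-1}a \pmod{p^k}$. The key observation is that $p \mid a$, together with $i^{-1}$ being a unit, forces $p \mid x_0$. I would then verify that this same $x_0$ solves the congruence for every $t$: substituting gives $(tp^{k-1}-i)x_0 = tp^{k-1}x_0 - ix_0 \equiv tp^{k-1}x_0 + a \pmod{p^k}$, and since $p \mid x_0$ the term $tp^{k-1}x_0$ is divisible by $p^k$ and therefore vanishes, leaving $a$. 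Hence $x_0$ is \emph{the} unique solution for all $t$, i.e.\ the solution is independent of $t$.

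The step I expect to matter most is the interplay between the two hypotheses: the condition $p \mid a$ is precisely what guarantees $p \mid x_0$, which in turn annihilates the $t$-dependent term $tp^{k-1}x_0$ modulo $p^k$. Without this divisibility the $t$-dependence would not cancel and the solution would genuinely vary with $t$, so this is the crux of the argument rather than a routine calculation.
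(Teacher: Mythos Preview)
Your proof is correct and follows essentially the same approach as the paper: both arguments solve $-ix\equiv a\pmod{p^k}$ to obtain $x_0$, observe that $p\mid a$ forces $p\mid x_0$, verify that $x_0$ then satisfies $(tp^{k-1}-i)x_0\equiv a\pmod{p^k}$ for every $t$ because the term $tp^{k-1}x_0$ vanishes modulo $p^k$, and conclude by uniqueness since $\gcd(tp^{k-1}-i,p^k)=1$. The only difference is the order in which uniqueness and the explicit solution are presented, which is immaterial.
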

\begin{proof}
Since $\gcd(i,p)=1$, it follow from Lemma \ref{lm:about_solutions_on_zn} that $-ix\equiv a \pmod{p^k}$ has exactly one solution denoted by $x_0$. Note that $p\mid a$, so it must have $p \mid x_0$. Thus we have
\begin{equation*}\begin{split}
&(tp^{k-1}-i)x_0 \pmod{p^k} \\
\equiv &tp^{k-1}x_0-ix_0 \pmod{p^k} \\
\equiv & -ix_0 \pmod{p^k} \\
\equiv & a \pmod{p^k}
\end{split}\end{equation*}
Therefore, $x_0$ is also a solution of congruence $(tp^{k-1}-i)x\equiv a \pmod{p^k}$. Since $\gcd(tp^{k-1}-i, p^k)=1$, the congruence $(tp^{k-1}-i)x\equiv a \pmod{p^k}$ has only one solution $x_0$. Obviously $x_0$ is independent of $t$.
\end{proof}

Next we will show some classes of ZDFs by choosing different prime powers $p^k$ , where $p$ is a prime number and $k \ge 2$ is an integer.

\subsection{Case $n=4$}
Applying Theorem \ref{th:construct_gzdb_on_generic_ring} on $\Z_4$, the situations are clear since there are only two subgroups of $Z_4^\times$. All the ZDFs are listed in Table \ref{tb:gzdb_on_z4}
\begin{table}[!htbp]
\centering
\caption{\text{$(n,m,S)$ ZDFs on $\Z_4$}}\label{tb:gzdb_on_z4}
\begin{tabular*}{\linewidth }{c|c|c|c}
\hline
$G$ & $D_G$ & $g_G$ & $(n,m,S)$   \\
\hline
$\{1\}$ &$\{\{0\}, \{1\}, \{2\}, \{3\} \}$ & $g_G(x)=\{x\}$ & $(4,4,\{0\})$  \\
\hline
$\{1, 3\}$ & $\{\{0\}, \{1, 3\}, \{2\} \}$ & $g_G(x)=\begin{cases}
\{0\}, & x=0, \\
\{2\}, & x=2, \\
\{1,3\}, & x \in \{1,3\}.
\end{cases}$ & $(4,3,\{0, 2\})$  \\
\hline
\end{tabular*}
\end{table}

\subsection{Case $n=2^k$}
Let $n=2^k$, $G=\langle 2^{k-1}-1 \rangle$, where $k > 2$. Then we have Theorem \ref{th:construct_gzdb_on_2k} shown as follows.
\begin{thms}\label{th:construct_gzdb_on_2k}
Let $n=2^k$, where $k > 2$, and let $G=\langle 2^{k-1}-1 \rangle$ be a subgroup of $Z_n^\times$. Then the coset index function $f_G$ is a $(2^k, 2^{k-1}+1, \{0, 2\})$ ZDF.
\end{thms}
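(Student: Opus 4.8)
The plan is to apply Theorem \ref{th:construct_gzdb_on_generic_ring} directly, which reduces the statement to three computations: (i) identifying the subgroup $G$ explicitly, (ii) working out the coset-size distribution to obtain $m = \sum_{a \in d(G)} M(G,a)/a$, and (iii) counting solutions of the relevant linear congruences to obtain $S$. The first task is to pin down $G = \langle 2^{k-1} - 1 \rangle$. Writing $e = 2^{k-1} - 1$, I would expand $e^2 = 2^{2k-2} - 2^k + 1$ and observe that for $k > 2$ both $2^{2k-2}$ and $2^k$ vanish modulo $2^k$, so $e^2 \equiv 1 \pmod{2^k}$. Since $e \neq 1$ when $k > 2$, this shows $|G| = 2$ and $G = \{1,\, 2^{k-1} - 1\}$. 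The hypothesis $k > 2$ is essential here: for $k = 2$ one has $e = 1$ and $G$ is trivial, which is exactly why the case $n = 4$ is treated separately in Table \ref{tb:gzdb_on_z4}.

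For the middle parameter $m$, I would analyze the coset $rG = \{r,\, re\}$, which is a singleton precisely when $r(e - 1) \equiv 0 \pmod{2^k}$, i.e. $r(2^{k-1} - 2) \equiv 0$. Factoring $2^{k-1} - 2 = 2(2^{k-2} - 1)$ with $2^{k-2} - 1$ odd (again using $k > 2$), Lemma \ref{lm:about_solutions_on_zn} gives $\gcd(2^{k-1} - 2,\, 2^k) = 2$, so this homogeneous congruence has exactly two solutions, namely $r \in \{0,\, 2^{k-1}\}$. Hence $d(G) = \{1, 2\}$ with $M(G, 1) = 2$ and $M(G, 2) = 2^k - 2$, and Theorem \ref{th:construct_gzdb_on_generic_ring} yields $m = \tfrac{2}{1} + \tfrac{2^k - 2}{2} = 2^{k-1} + 1$, as claimed.

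For the set $S$, I would invoke the description $S(G, a) = \bigcup_{g \in G} \{x \mid x(g - 1) = a\}$ for nonzero $a$. The term $g = 1$ contributes nothing, since $x \cdot 0 = a$ is unsolvable for $a \neq 0$, so $S(G, a)$ is precisely the solution set of $(2^{k-1} - 2)x \equiv a \pmod{2^k}$. Because $\gcd(2^{k-1} - 2,\, 2^k) = 2$, Lemma \ref{lm:about_solutions_on_zn} gives $N(G, a) = 2$ when $a$ is even and $N(G, a) = 0$ when $a$ is odd. Since $k > 2$ guarantees that $\Z_{2^k} \setminus \{0\}$ contains elements of both parities, I conclude $S = \{0, 2\}$.

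There is no deep obstacle once Theorem \ref{th:construct_gzdb_on_generic_ring} is in hand, as the problem collapses entirely to solving linear congruences over $\Z_{2^k}$. The one point that genuinely demands care is the repeated use of the hypothesis $k > 2$ to guarantee both that $e \neq 1$ and that $2^{k-2} - 1$ is odd; these are exactly what force $|G| = 2$ and $\gcd(2^{k-1} - 2,\, 2^k) = 2$, and without them the parameters degenerate. I would therefore make each gcd computation explicit rather than leave it implicit, so that the role of $k > 2$ is transparent throughout.
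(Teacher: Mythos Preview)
Your proposal is correct and follows essentially the same route as the paper's proof: both apply Theorem~\ref{th:construct_gzdb_on_generic_ring}, compute $(2^{k-1}-1)^2\equiv 1\pmod{2^k}$ to get $|G|=2$, determine that $|rG|=1$ exactly when $2^{k-1}\mid r$ to obtain $m=2^{k-1}+1$, and then use $\gcd(2^{k-1}-2,2^k)=2$ with Lemma~\ref{lm:about_solutions_on_zn} to conclude $N(G,\alpha)\in\{0,2\}$. Your explicit emphasis on where the hypothesis $k>2$ enters (to ensure $e\neq 1$ and $2^{k-2}-1$ odd) is a welcome addition that the paper leaves somewhat implicit.
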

\begin{proof}
The proof is consisted by four steps:
\begin{enumerate}[(1)]
\item We claim that $G= \{1, 2^{k-1}-1\}$ and $|G|=2$, since
\begin{equation*}\begin{split}
&(2^{k-1}-1)^2 \pmod{2^k} \\
\equiv &2^{2(k-1)}-2\cdot2^{k-1}+1\pmod{2^k} \\
\equiv &1\pmod{2^k},
\end{split}\end{equation*}
and
$$2^{k-1}-1 \not\equiv 1\pmod{2^k}.$$

\item We assert that for $\forall \alpha \in \Z_{2^k}$, it has
\begin{equation*}
|\alpha G| = \begin{cases}
1, & 2^{k-1} \mid \alpha, \\
2, & 2^{k-1} \nmid \alpha.
\end{cases}
\end{equation*}
If $\alpha \equiv \alpha(2^{k-1}-1) \pmod{2^k}$, then it implies $\alpha\cdot 2\cdot(2^{k-2}-1) \equiv 0 \pmod{2^k}$. We have $\gcd(2^{k-2}-1, 2^k)=1$, since $k>2$. So it must have $2^{k-1} \mid \alpha$. \\
\item The size of image of $f_G$ can be obtained as follows.
\begin{equation*}\begin{split}
&|\Ima(f_G)|=|D_G| \\
=&\frac{2^k-2}{2}+2 \\
=&2^{k-1}+1.
\end{split}\end{equation*}

\item Based on Theorem \ref{th:construct_gzdb_on_generic_ring}, it suffices to show that for $\forall \alpha \in \Z_{2^k}\backslash\{0\}$, it has
$$|\bigcup_{g \in G }\{x \in \Z_{2^k} \mid (g-1)x\equiv \alpha \pmod{2^k} \}| \in \{0, 2\}.$$
Obviously $\bigcup_{g=1}\{x \in \Z_{2^k} \mid (g-1)x\equiv \alpha \pmod{2^k} \} = \emptyset$, hence we only have to show that
$$|\{x \in \Z_{2^k} \mid (2^{k-1}-2)x\equiv \alpha \pmod{2^k} \}| \in \{0, 2\}.$$
Note that $\gcd(2^{k-1}-2, 2^k)=2$. By Lemma \ref{lm:about_solutions_on_zn}, the congruence $(2^{k-1}-2)x\equiv \alpha \pmod{2^k}$ have solutions if and only if $2\mid \alpha$, and if $2\mid \alpha$, there are exactly $2$ solutions. So
\begin{equation*}
N(\langle 2^{k-1}-1 \rangle,\alpha) = \begin{cases}
2, & \text{$2\mid \alpha$}, \\
0, & 2\nmid\alpha.
\end{cases}.
\end{equation*}

\end{enumerate}
Finally, $f_G$ is a $(2^k, 2^{k-1}+1, \{0, 2\})$ ZDF.
\end{proof}

\subsection{Case $n=p^2$}
Let $n=p^2$, $G=\langle p-1 \rangle$, where $p$ is an odd prime. We have Theorem \ref{th:construct_gzdb_on_prime2}.
\begin{thms}\label{th:construct_gzdb_on_prime2}
Let $n=p^2$, where $p$ is an odd prime, and let $G=\langle p-1 \rangle$ be a subgroup of $\Z_n^\times$. Then the coset index function $f_G$ is a $(p^2, p, \{p, p^2-p+1\})$ ZDF.
\end{thms}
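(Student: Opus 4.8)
The plan is to mirror the four-step structure of the proof of Theorem \ref{th:construct_gzdb_on_2k} and then invoke Theorem \ref{th:construct_gzdb_on_generic_ring}. First I would pin down $G=\langle p-1\rangle$ by expanding $(p-1)^j$ modulo $p^2$ with the binomial theorem: since $p^2\mid p^i$ for $i\ge 2$, only the first two terms survive and $(p-1)^j\equiv(-1)^j(1-jp)\pmod{p^2}$. From this I would read off that the order of $p-1$ is $2p$ (the even powers first return to $1$ when $p\mid j$, forcing $j=2p$, while the odd powers are never $1$ because that would require $p\mid 2$), so $|G|=2p$ and the elements of $G$ are explicitly $g_j\equiv(-1)^j(1-jp)\pmod{p^2}$ for $0\le j\le 2p-1$.

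Next I would determine $m=|D_G|$ by computing coset sizes according to the $p$-adic valuation of the representative $\alpha$. For a unit $\alpha$ the map $g\mapsto\alpha g$ is injective on $G$, so $|\alpha G|=2p$; for $\alpha=cp$ with $1\le c\le p-1$ a short computation gives $\alpha g_j\equiv(-1)^j cp\pmod{p^2}$, so $\alpha G=\{cp,(p-c)p\}$ has size $2$; and $\alpha=0$ gives the singleton $\{0\}$. Counting orbits yields $\tfrac{p-1}{2}$ unit-cosets, $\tfrac{p-1}{2}$ cosets of nonzero multiples of $p$, and one zero-coset, for a total of $|D_G|=p$, that is $m=p$ (this also matches $\sum_{a\in d(G)}M(G,a)/a$).

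The heart of the argument is to evaluate $N(G,\alpha)=|\bigcup_{g\in G}\{x:(g-1)x\equiv\alpha\pmod{p^2}\}|$ for each nonzero $\alpha$. The key structural fact I would establish is the split of the multipliers $g_j-1$: for even $j=2\ell$ one gets $g_j-1=-2\ell p$, and these run through exactly all multiples of $p$; for odd $j$ one gets $g_j-1\equiv-2\pmod p$, hence a unit. When $\alpha$ is a unit, only the $p$ odd-$j$ equations are solvable, each contributing a unique solution, and these are pairwise distinct because the $g_j-1$ are distinct and $\alpha$ is invertible, so $N(G,\alpha)=p$. When $\alpha=cp$ is a nonzero multiple of $p$, the $j=0$ term is empty, while the remaining even-$j$ equations $(-2\ell p)x\equiv cp$ are each solvable with $p$ solutions (Lemma \ref{lm:about_solutions_on_zn}) and, as $\ell$ runs over $1,\dots,p-1$, their solution sets sweep out precisely the residues $x\not\equiv0\pmod p$, accounting for $p^2-p$ elements.

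The delicate point, and what I expect to be the main obstacle, is handling the odd-$j$ contributions in this second case and pinning down the exact total. Each odd-$j$ equation has the unique solution $x_j=cp\,(g_j-1)^{-1}$, but since multiplying a multiple of $p$ by a unit depends only on that unit modulo $p$, and $g_j-1\equiv-2\pmod p$ for \emph{every} odd $j$, all these solutions collapse to a single value $x^\ast\equiv c(-2)^{-1}p\pmod{p^2}$, which is a nonzero multiple of $p$. Because $x^\ast$ lies outside the even-$j$ solution set (that set contains only non-multiples of $p$), the union is disjoint and $N(G,\alpha)=(p^2-p)+1=p^2-p+1$. Collecting the two cases gives $S=\{p,\,p^2-p+1\}$ and completes the identification of all three parameters; the care needed is entirely in this collapse-and-disjointness bookkeeping, with the valuation splitting and Lemma \ref{lm:about_solutions_on_zn} doing the rest routinely.
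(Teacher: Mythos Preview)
Your proposal is correct and follows essentially the same approach as the paper's proof: the same four-step structure, the same explicit description of $G$ via binomial expansion, the same coset-size trichotomy, and the same even/odd-$j$ split of the multipliers $g_j-1$ into multiples of $p$ versus units congruent to $-2\pmod p$. The only cosmetic difference is that where the paper invokes Lemma~\ref{lm:about_solution_on_primepower} to show the odd-$j$ solutions collapse, you rederive that fact inline via the observation that $cp\cdot u$ depends only on $u\bmod p$; this is exactly the content of that lemma in the case $k=2$.
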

\begin{proof}
The proof is consisted by four steps:
\begin{enumerate}[(1)]
\item We claim that $G= \{ (-1)^{t-1}(tp-1)\pmod{p^2} \mid t=0,1,\ldots,(2p-1)\}$, and $|G|=2p$. We have
\begin{equation*}\begin{split}
& (p-1)^{2p}\equiv 1 \pmod{p^2}, \\
& (p-1)^{2}\equiv -2p+1\not\equiv 1 \pmod{p^2}, \\
 \mbox{and}\ & (p-1)^{p}\equiv -1\not\equiv 1 \pmod{p^2}.
\end{split}\end{equation*}
Hence the multiplicative order of $p-1$ is $2p$, i.e., $|G|=2p$. Furthermore,
\begin{equation*}\begin{split}
G &= \{ (p-1)^t \pmod{p^2} \mid t=0,1,\ldots,(2p-1)\} \\
&=\{ (-1)^{t-1}(tp-1) \pmod{p^2} \mid t=0,1,\ldots,(2p-1)\}
\end{split}\end{equation*}

\item We assert that for $\forall \alpha \in \Z_{p^2}$, it has
\begin{equation*}
|\alpha G| = \begin{cases}
1, & \alpha = 0, \\
2, & p \mid \alpha \text{ and } \alpha \ne 0, \\
2p, & p \nmid \alpha.
\end{cases}
\end{equation*}

If $\alpha=0$, then obviously $|\alpha G|=|\{ 0\}| = 1$.

If $p \nmid \alpha$, then $\alpha$ has a multiplicative inverse. As a result, $|\alpha G|=|G|=2p$.

If $p \mid \alpha$ and $\alpha \ne 0$, then it must have
$$\alpha G =\{\alpha, \alpha(p-1) \pmod{p^2}\},$$
since
\begin{equation*}\begin{split}
&\alpha (p-1)^2 \pmod{p^2} \\
\equiv& \alpha p^2-2p\alpha+\alpha \pmod{p^2} \\
\equiv & \alpha \pmod{p^2}.
\end{split}\end{equation*}
Moreover, if $\alpha \equiv \alpha(p-1) \pmod{p^2}$, i.e., $\alpha (p-2) \equiv 0 \pmod{p^2}$, then $\alpha =0$ which is a contradiction.

\item The size of image of $f_G$ can be obtained as follows.
\begin{equation*}\begin{split}
&|\Ima(f_G)|=|D_G| \\
=& \frac{p(p-1)}{2p}+\frac{p-1}{2}+1 \\
=& p.
\end{split}\end{equation*}

\item We make a partition of the group $G$ as $\{G_1, G_2\}$, where
\begin{equation*}\begin{split}
G_1 =& \{ (-1)^{t-1}(tp-1) \pmod{p^2}\mid t=0,2,4,\ldots,(2p-2)\} \\
=& \{ -tp+1 \mid t=0,2,4,\ldots,(2p-2)\},\\
G_2 = & \{ (-1)^{t-1}(tp-1) \pmod{p^2}\mid t=1,3,5,\ldots,(2p-1)\} \\
=& \{ tp-1 \mid t=1,3,5,\ldots,(2p-1)\}.
\end{split}\end{equation*}
Due to Theorem \ref{th:construct_gzdb_on_generic_ring}, it suffices to show that for $\forall \alpha \in \Z_{p^2}\backslash\{0\}$,
$$|\bigcup_{g \in G }\{x \in \Z_{p^2} \mid (g-1)x\equiv \alpha \pmod{p^2} \}| \in \{p, p^2-p+1\}.$$

For $G_1$,
\begin{equation*}\begin{split}
&S(G_1,\alpha) \\
=&\bigcup_{g \in G_1 }\{x \in \Z_{p^2} \mid (g-1)x\equiv \alpha \pmod{p^2} \} \\
=&\bigcup_{i=0}^{p-1}\{x \in \Z_{p^2} \mid ((-2ip+1)-1)x\equiv \alpha \pmod{p^2} \} \\
=&\bigcup_{i=0}^{p-1}\{x \in \Z_{p^2} \mid -2ipx\equiv \alpha \pmod{p^2} \}.
\end{split}\end{equation*}
Note that $\alpha \ne 0$. Hence $\{x \in \Z_{p^2} \mid 0x\equiv \alpha \pmod{p^2}\} = \emptyset$.
It follows from Lemma \ref{lm:about_solutions_on_zn} that
\begin{equation*}\begin{split}
&S(G_1,\alpha) \\
=&\bigcup_{i=1}^{p-1}\{x \in \Z_{p^2} \mid -2ipx\equiv \alpha \pmod{p^2} \} \\
=&\begin{cases}
\bigcup_{i=1}^{p-1}\bigcup_{k=0}^{p-1}\{[\frac{\alpha/p}{-2i}]_p+kp \}, & p \mid \alpha, \\
\emptyset, & p\nmid \alpha,
\end{cases}\\
=&\begin{cases}
\Z_{p^2}^\times, & p \mid \alpha, \\
\emptyset, & p\nmid \alpha,
\end{cases}
\end{split}\end{equation*}
where $[\frac{b}{a}]_n$ denotes a solution of the congruence $ax\equiv b \pmod{n}$. So we have
\begin{equation*}\begin{split}
N(G_1, \alpha) =&|S(G_1, \alpha)| \\
=&\begin{cases}
p^2-p, & p \mid \alpha, \\
0, & p\nmid \alpha.
\end{cases}
\end{split}\end{equation*}

For $G_2$,
\begin{equation*}\begin{split}
&S(G_2,\alpha) \\
=&\bigcup_{g \in G_2 }\{x \in \Z_{p^2} \mid (g-1)x\equiv \alpha \pmod{p^2} \} \\
=&\bigcup_{i=0}^{p-1}\{x \in \Z_{p^2} \mid (((2i+1)p-1)-1)x\equiv \alpha \pmod{p^2} \} \\
=&\bigcup_{i=0}^{p-1}\{x \in \Z_{p^2} \mid ((2i+1)p - 2)x\equiv \alpha \pmod{p^2} \} \\
=&\bigcup_{i=0}^{p-1}\{[\frac{\alpha}{(2i+1)p - 2}]_{p^2} \}. \\
\end{split}\end{equation*}

Due to Lemma \ref{lm:about_solution_on_primepower}, we have
\begin{equation*}\begin{split}
N(G_2, \alpha) =&|S(G_2, \alpha)| \\
=&\begin{cases}
1, & p \mid \alpha, \\
p, & p \nmid \alpha.
\end{cases}
\end{split}\end{equation*}

Moreover, for $\forall x \in \bigcup_{i=0}^{p-1}\{[\frac{\alpha}{(2i+1)p - 2}]_{p^2} \}$, it must have $p \mid x$, by Lemma \ref{lm:about_solution_on_primepower}. So $x \notin \Z_{p^2}^\times$.

Finally, for $\forall \alpha \in \Z_{p^2}\backslash\{0\}$, it has
\begin{equation*}\begin{split}
&S(G_1,\alpha) \bigcap S(G_2,\alpha) \\
=&\begin{cases}
\Z_{p^2}^\times \bigcap (\bigcup_{i=1}^{p-1}\{[\frac{\alpha}{(2i+1)p - 2}]_{p^2} \})& p \mid \alpha , \\
\emptyset \bigcap (\bigcup_{i=1}^{p-1}\{[\frac{\alpha}{(2i+1)p - 2}]_{p^2} \}), & p\nmid \alpha,
\end{cases}\\
=&\begin{cases}
\emptyset, & p \mid \alpha, \\
\emptyset, & p \nmid \alpha,
\end{cases}\\
=&\emptyset.
\end{split}\end{equation*}
Consequently,
\begin{equation*}\begin{split}
N(G, \alpha)=&N(G_1, \alpha) + N(G_2, \alpha)\\
=&\begin{cases}
p^2-p+1, & p \mid \alpha, \\
p, & p \nmid \alpha.
\end{cases}
\end{split}\end{equation*}

\end{enumerate}
Finally, $f_G$ is a $(p^2, p, \{p, p^2-p+1\})$ ZDF.
\end{proof}

\subsection{Case $n=p^k$}
Let $n=p^k$, $G=\langle p^{k-1}-1 \rangle$ where $p$ is an odd prime number and $k > 2$. We have the Theorem \ref{th:construct_gzdb_on_prime2} as follow.
\begin{thms}\label{th:construct_gzdb_on_primek}
Let $n=p^k$, where $p$ is an odd prime number. Let $G=\langle p^{k-1}-1  \rangle$ be a subgroup of $\Z_n^\times$. Then the coset index function $f_G$ is a $(p^k, \frac{2p^{k-1}-p^{k-2}+1}{2}, \{1, p, p^k-p^{k-1}+1\})$ ZDF.
\end{thms}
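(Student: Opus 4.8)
The plan is to follow the same four-step template used in the proof of Theorem~\ref{th:construct_gzdb_on_prime2}, replacing $p^2$ by $p^k$ and organizing the analysis by the $p$-adic valuation $v_p(\alpha)$. \emph{Step 1 (the group $G$).} First I would pin down $G=\langle p^{k-1}-1\rangle$. Writing $e=p^{k-1}-1=-(1-p^{k-1})$ and expanding by the binomial theorem, every term $\binom{t}{j}p^{j(k-1)}$ with $j\ge 2$ vanishes modulo $p^k$ because $2(k-1)\ge k$ for $k\ge 2$; hence $e^{t}\equiv(-1)^{t-1}(tp^{k-1}-1)\pmod{p^k}$. Solving $e^{t}\equiv 1$, the even-$t$ case forces $p\mid t$ while the odd-$t$ case is impossible since $p^{k-1}\nmid 2$, so the multiplicative order of $e$ is $2p$ and $G=\{(-1)^{t-1}(tp^{k-1}-1)\bmod p^k\mid t=0,\dots,2p-1\}$ with $|G|=2p$, exactly as in the case $k=2$.

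\emph{Steps 2--3 (coset sizes and $m$).} Next I would show that for $\alpha\in\Z_{p^k}$ the coset $\alpha G$ has size $1$ if $\alpha=0$, size $2$ if $p\mid\alpha$ and $\alpha\ne 0$, and size $2p$ if $p\nmid\alpha$. The key observation is that when $p\mid\alpha$ one has $\alpha tp^{k-1}\equiv 0\pmod{p^k}$, so $\alpha e^{t}\equiv(-1)^{t}\alpha$ and $\alpha G=\{\alpha,-\alpha\}$, which has two elements because $p$ is odd; when $p\nmid\alpha$, $\alpha$ is a unit and $|\alpha G|=|G|=2p$. Counting the $\phi(p^k)=p^{k-1}(p-1)$ units in cosets of size $2p$, the $p^{k-1}-1$ nonzero multiples of $p$ in cosets of size $2$, and the single coset $\{0\}$, gives $m=|D_G|=\frac{p^{k-2}(p-1)}{2}+\frac{p^{k-1}-1}{2}+1=\frac{2p^{k-1}-p^{k-2}+1}{2}$.

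\emph{Step 4 (the set $S$).} This is where the genuinely new work lies. As before I would split $G=G_1\cup G_2$ into even and odd powers, so that $g-1$ equals $-2ip^{k-1}$ for $g\in G_1$ ($i=0,\dots,p-1$) and $(2i+1)p^{k-1}-2$ for $g\in G_2$ ($i=0,\dots,p-1$). For $G_1$, Lemma~\ref{lm:about_solutions_on_zn} shows each congruence $-2ip^{k-1}x\equiv\alpha$ with $i\ge 1$ is solvable only when $p^{k-1}\mid\alpha$, and in that case the solutions fill out exactly the units $\Z_{p^k}^{\times}$, so $N(G_1,\alpha)=p^k-p^{k-1}$ if $v_p(\alpha)=k-1$ and $0$ otherwise. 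For $G_2$, each coefficient $(2i+1)p^{k-1}-2\equiv-2\pmod p$ is a unit, so every congruence has a unique solution $x_i\equiv\frac{\alpha}{-2}\pmod p$; reducing $x_i=x_j$ modulo $p^k$ forces $2(i-j)p^{k-1}x_i\equiv 0$, which when $p\nmid\alpha$ (so $x_i$ is a unit) collapses to $i=j$, giving $p$ distinct solutions, while when $p\mid\alpha$ Lemma~\ref{lm:about_solution_on_primepower} makes the solution independent of $t$, giving a single solution. Finally I would assemble $N(G,\alpha)=N(G_1,\alpha)+N(G_2,\alpha)$ via disjointness: whenever $p\mid\alpha$ the $G_2$-solution lies in $p\Z_{p^k}$ while the $G_1$-solutions are units, so the two sets never meet. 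Sorting by valuation then yields $N=p$ for $v_p(\alpha)=0$, $N=1$ for $1\le v_p(\alpha)\le k-2$, and $N=p^k-p^{k-1}+1$ for $v_p(\alpha)=k-1$, that is, $S=\{1,p,p^k-p^{k-1}+1\}$.

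The main obstacle I anticipate is the bookkeeping in Step~4: unlike the $k=2$ case, $v_p(\alpha)$ now ranges over $\{0,1,\dots,k-1\}$, and it is precisely the intermediate band $1\le v_p(\alpha)\le k-2$---absent when $k=2$---that produces the new value $N=1$. Getting the solvability threshold $p^{k-1}\mid\alpha$ for $G_1$ right, correctly invoking Lemma~\ref{lm:about_solution_on_primepower} for $G_2$, and verifying the disjointness of the two solution sets are the steps most prone to error.
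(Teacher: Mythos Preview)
Your proposal is correct and follows essentially the same four-step template as the paper's proof, which merely states the outcomes of Steps (1)--(4) and refers back to the $k=2$ case; you have supplied exactly the missing details, in particular the correct solvability threshold $p^{k-1}\mid\alpha$ for $G_1$, the use of Lemma~\ref{lm:about_solution_on_primepower} for $G_2$, and the observation that the intermediate range $1\le v_p(\alpha)\le k-2$ (empty when $k=2$) is what produces the new value $N=1$.
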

\begin{proof}The proof is similar with that of Theorem \ref{th:construct_gzdb_on_prime2}. We have
\begin{enumerate}[(1)]
\item $G= \{ (-1)^{t-1}(tp^{k-1}-1) \pmod{p^k}\mid t=0,1,\ldots,(2p-1)\}$ and $|G|=2p$;
\item for $\forall \alpha \in \Z_{p^k}$, it has
\begin{equation*}
|\alpha G| = \begin{cases}
1, & \alpha = 0, \\
2, & p \mid \alpha \text{ and } \alpha \ne 0, \\
2p, & p \nmid \alpha;
\end{cases}
\end{equation*}
\item $|\Ima(f_G)|=p^{k-1}-\frac{p^{k-2}-1}{2}$;
\item for $\forall \alpha \in \Z_{p^k}\backslash\{0\}$, it has
\begin{equation*}\begin{split}
N(G, \alpha)=&\begin{cases}
p^{k}-p^{k-1}+1, & p^{k-1} \mid \alpha, \\
p, & p \nmid \alpha, \\
1, & p^{k-1}\nmid \alpha \text{ and } p \mid \alpha.
\end{cases}
\end{split}\end{equation*}
\end{enumerate}
Finally, $f_G$ is a $(p^k, \frac{2p^{k-1}-p^{k-2}+1}{2}, \{1, p, p^k-p^{k-1}+1\})$ ZDF.
\end{proof}

\subsection{Others}
In this subsection, we propose more ZDFs by choosing different moduli $n$ and groups $G=\langle e \rangle$, according to Theorem \ref{th:construct_gzdb_on_generic_ring}.
\begin{thms}\label{th:construct_gzdb_on_primek2}
Suppose $s\ge 1$ is an integer. Let $n=p^k$, where $p$ is a prime number and $k \ge 2s$. Let $G=\langle p^{k-s}+1  \rangle$ be a subgroup of $\Z_n^\times$. Then the coset index function $f_G$ is a $(p^k, (sp+p-s)p^{k-s-1}, \bigcup_{i=0}^{s-1}{\{0, \sum_{j=0}^{i}{\varphi(p^{k-j})}\}})$ ZDF, where $\varphi$ is the Euler function.
\end{thms}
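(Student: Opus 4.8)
The plan is to imitate the four-step scheme used for Theorem \ref{th:construct_gzdb_on_prime2} and Theorem \ref{th:construct_gzdb_on_primek} and to finish by invoking Theorem \ref{th:construct_gzdb_on_generic_ring}. Throughout I set $e=p^{k-s}+1$, and for a nonzero element $\alpha$ I write $\alpha=p^{a}u$ with $\gcd(u,p)=1$, so that $a=v_{p}(\alpha)\in\{0,\ldots,k-1\}$ records the exact power of $p$ dividing $\alpha$. First I would pin down $G$: expanding $e^{t}=(1+p^{k-s})^{t}$ by the binomial theorem, the hypothesis $k\ge 2s$ gives $2(k-s)\ge k$, so every term $\binom{t}{j}p^{(k-s)j}$ with $j\ge 2$ vanishes modulo $p^{k}$. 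Hence $e^{t}\equiv 1+tp^{k-s}\pmod{p^{k}}$, which equals $1$ exactly when $p^{s}\mid t$; therefore $e$ has order $p^{s}$ and $G=\{1+tp^{k-s}\bmod p^{k}\mid t=0,1,\ldots,p^{s}-1\}$ with $|G|=p^{s}$.

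Next I would compute the coset sizes. For $\alpha\in\Z_{p^{k}}$ one has $\alpha G=\alpha+\{\alpha t p^{k-s}\bmod p^{k}\mid t=0,\ldots,p^{s}-1\}$, and since $u$ is a unit while $t$ sweeps a complete residue system, the translate set $\{\alpha t p^{k-s}\bmod p^{k}\}$ is precisely the set of multiples of $p^{\,a+k-s}$. This yields $|\alpha G|=p^{\max(s-a,0)}$, so $d(G)=\{1,p,\ldots,p^{s}\}$: the size $p^{s-a}$ occurs exactly for the $\varphi(p^{k-a})$ elements of valuation $a\in\{0,\ldots,s-1\}$, and the size $1$ occurs for the $p^{k-s}$ multiples of $p^{s}$. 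Feeding $M(G,p^{s-a})=\varphi(p^{k-a})$ and $M(G,1)=p^{k-s}$ into the formula $m=\sum_{a\in d(G)}M(G,a)/a$ of Theorem \ref{th:construct_gzdb_on_generic_ring}, each term $\varphi(p^{k-a})/p^{s-a}$ collapses to the $a$-independent value $p^{k-s-1}(p-1)$, so $m=s\,p^{k-s-1}(p-1)+p^{k-s}=(sp+p-s)p^{k-s-1}$, as claimed.

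The value set $S$ is where the real work lies. By Theorem \ref{th:construct_gzdb_on_generic_ring} I must determine $S(G,\alpha)=\bigcup_{g\in G}\{x\mid x(g-1)\equiv\alpha\}$ for each $\alpha\ne 0$. Rather than solving the $p^{s}-1$ congruences $tp^{k-s}x\equiv\alpha$ one by one (as in Theorem \ref{th:construct_gzdb_on_prime2}) and then wrestling with the overlaps of their solution sets, I would use the coset reformulation $S(G,\alpha)=\{x\mid \alpha\in\{x(g-1)\mid g\in G\}\}$. Applying the orbit computation of the second step to $x$ itself, with $c=v_{p}(x)$, the set $\{x(g-1)\mid g\in G\}$ is exactly the multiples of $p^{\,c+k-s}$; thus $x\in S(G,\alpha)$ if and only if $p^{\,c+k-s}\mid\alpha$, i.e. $c\le a-(k-s)$. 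Consequently $S(G,\alpha)=\{x\mid v_{p}(x)\le a-(k-s)\}$, which is empty when $a<k-s$ and otherwise the union of the valuation strata $c=0,1,\ldots,a-(k-s)$, each contributing $\varphi(p^{k-c})$ elements. Hence $N(G,\alpha)=0$ for $a<k-s$ and $N(G,\alpha)=\sum_{c=0}^{\,a-(k-s)}\varphi(p^{k-c})$ otherwise. Letting $i:=a-(k-s)$, as $a$ runs through $k-s,\ldots,k-1$ the index $i$ runs through $0,\ldots,s-1$, and every such value (together with $0$) is realized by the choices $\alpha=p^{a}$; this gives $S=\bigcup_{i=0}^{s-1}\{0,\sum_{j=0}^{i}\varphi(p^{k-j})\}$.

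The main obstacle is exactly this last step. A direct union-of-equations argument would have to track how the solution sets of the congruences $tp^{k-s}x\equiv\alpha$ intersect when several exponents $t$ share the same $p$-adic valuation, and deciding membership and cardinality there is awkward. The key simplification is to abandon the per-equation bookkeeping and instead reduce the whole membership question to the single inequality $v_{p}(x)\le v_{p}(\alpha)-(k-s)$ via the orbit description; once that reduction is established, the cardinality count is a routine stratification of $\Z_{p^{k}}$ by powers of $p$, and the telescoping identity $\sum_{c=0}^{i}\varphi(p^{k-c})=p^{k}-p^{k-i-1}$ confirms the stated form of $S$.
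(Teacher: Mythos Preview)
Your proposal is correct and follows the paper's four-step scheme in steps (1)--(3) essentially verbatim: you compute $G=\{1+tp^{k-s}\bmod p^{k}\mid 0\le t\le p^{s}-1\}$ via the binomial expansion (using $2(k-s)\ge k$), obtain $|\alpha G|=p^{\max(s-v_{p}(\alpha),0)}$, and recover $m=(sp+p-s)p^{k-s-1}$ exactly as the paper does.

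Where you diverge is step (4). The paper's model argument (Theorem \ref{th:construct_gzdb_on_prime2}) partitions $G$ and solves each congruence $(g-1)x\equiv\alpha$ individually with Lemma \ref{lm:about_solutions_on_zn}, then checks the solution sets are disjoint before summing; the sketch for Theorem \ref{th:construct_gzdb_on_primek2} simply asserts the final formula for $N(G,\alpha)$ along the same lines. You instead invert the problem, observing that $\{x(g-1)\mid g\in G\}$ is precisely the ideal $p^{\,v_{p}(x)+k-s}\Z_{p^{k}}$, so that membership $x\in S(G,\alpha)$ reduces to the single inequality $v_{p}(x)\le v_{p}(\alpha)-(k-s)$. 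This sidesteps the disjointness bookkeeping entirely and makes the cardinality $N(G,\alpha)=\sum_{c=0}^{v_{p}(\alpha)-(k-s)}\varphi(p^{k-c})$ immediate by stratifying $\Z_{p^{k}}$ by valuation. Your route is cleaner and scales to arbitrary $s$ without splitting $G$, while the paper's per-equation approach stays closer to the elementary Lemmas \ref{lm:about_solutions_on_zn}--\ref{lm:about_solution_on_primepower}; both yield the same value set $S=\bigcup_{i=0}^{s-1}\{0,\sum_{j=0}^{i}\varphi(p^{k-j})\}$.
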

\begin{proof}The proof is similar with that of Theorem \ref{th:construct_gzdb_on_prime2}. We have
\begin{enumerate}[(1)]
\item $G= \{ 1 + tp^{k-s} \pmod{p^k}\mid t=0,1,\ldots,(p^s-1)\}$ and $|G|=p^s$;
\item for $\forall \alpha \in \Z_{n}$, it has
\begin{equation*}
|\alpha G| =\begin{cases}
1, & \alpha = 0,  \\
1, & p^i \mid\mid \alpha, s \le i \le k-1, \\
p^{s-i}, & p^i \mid\mid \alpha, 0 \le i \le s-1,
\end{cases}
\end{equation*}
where $p^i\mid\mid \alpha$ if $p^i$ is the largest power of $p$ dividing $\alpha$.
\item $|\Ima(f_G)|=\sum_{i=0}^{s-1}{\frac{\varphi(p^{k-i})}{p^{s-i}}}+\sum_{i=s}^{k}{\varphi(p^{k-i})}+1$=$(sp+p-s)p^{k-s-1}$;
\item for $\forall \alpha \in \Z_{p^k}\backslash\{0\}$, it has
\begin{equation*}\begin{split}
N(G, \alpha)=&\begin{cases}
0, & p^i \mid\mid \alpha, 1 \le i \le k-s-1, \\
\sum_{j=0}^{i}{\varphi(p^{k-j})}, & p^i \mid\mid \alpha, k-s \le i \le k-1,
\end{cases}
\end{split}\end{equation*}
\end{enumerate}
Finally, $f_G$ is a $(p^k, (sp+p-s)p^{k-s-1}, \bigcup_{i=0}^{s-1}{\{0, \sum_{j=0}^{i}{\varphi(p^{k-j})}\}})$ ZDF.
\end{proof}

In Theorem \ref{th:construct_gzdb_on_primek2}, let $s=1$, we obtain
\begin{prop}
Let $n=p^k$, where $p$ is a prime number and $k \ge 2$. Let $G=\langle p^{k-1}+1  \rangle$ be a subgroup of $\Z_n^\times$. Then the coset index function $f_G$ is a $(p^k, 2p^{k-1}-p^{k-2}, \{0, p^{k}-p^{k-1}\})$ ZDF.
\end{prop}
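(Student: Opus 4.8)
The plan is to read this Proposition as the specialization $s=1$ of Theorem~\ref{th:construct_gzdb_on_primek2} and to check that the parameters collapse correctly. With $s=1$ the hypothesis $k\ge 2s$ becomes $k\ge 2$, the generator $p^{k-s}+1$ becomes $p^{k-1}+1$, the image size $(sp+p-s)p^{k-s-1}$ becomes $(2p-1)p^{k-2}=2p^{k-1}-p^{k-2}$, and the zero-difference set $\bigcup_{i=0}^{s-1}\{0,\sum_{j=0}^{i}\varphi(p^{k-j})\}$ collapses to the single pair $\{0,\varphi(p^{k})\}=\{0,p^{k}-p^{k-1}\}$. At the formal level the proof is just this substitution, so I would state it as a corollary of Theorem~\ref{th:construct_gzdb_on_primek2}.

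I would nevertheless prefer to give a short self-contained derivation, since for $s=1$ every step of the template used in Theorem~\ref{th:construct_gzdb_on_prime2} becomes transparent. First I would identify $G$: because $(1+p^{k-1})^{t}\equiv 1+tp^{k-1}\pmod{p^{k}}$ (every higher binomial term carries a factor $p^{2(k-1)}\equiv 0$), the element $p^{k-1}+1$ has order exactly $p$, so $G=\{1+tp^{k-1}\mid t=0,1,\ldots,p-1\}$ and $|G|=p$. Next I would read off the coset sizes: writing $\alpha G=\{\alpha+t\alpha p^{k-1}\}$, two terms coincide iff $(t-t')\alpha\equiv 0\pmod p$, so $|\alpha G|=p$ when $p\nmid\alpha$ and $|\alpha G|=1$ otherwise. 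Counting through Theorem~\ref{th:construct_gzdb_on_generic_ring}, the $\varphi(p^{k})$ units split into $p^{k-1}-p^{k-2}$ cosets of size $p$, while the $p^{k-1}$ multiples of $p$ are fixed points, giving $|\Ima(f_G)|=(p^{k-1}-p^{k-2})+p^{k-1}=2p^{k-1}-p^{k-2}$.

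The heart of the argument is the third parameter. By Theorem~\ref{th:construct_gzdb_on_generic_ring} I must evaluate, for each $\alpha\ne 0$, the size of $\bigcup_{t=0}^{p-1}\{x\mid tp^{k-1}x\equiv\alpha\pmod{p^{k}}\}$. The term $t=0$ is empty. For $1\le t\le p-1$ we have $\gcd(tp^{k-1},p^{k})=p^{k-1}$, so by Lemma~\ref{lm:about_solutions_on_zn} the congruence $tp^{k-1}x\equiv\alpha$ is solvable iff $p^{k-1}\mid\alpha$; hence $N(G,\alpha)=0$ whenever $p^{k-1}\nmid\alpha$. When $p^{k-1}\mid\alpha$, say $\alpha=cp^{k-1}$ with $1\le c\le p-1$, each such congruence reduces to $tx\equiv c\pmod p$, whose solution set is the residue class $x\equiv t^{-1}c\pmod p$ lifted to $\Z_{p^{k}}$, a set of size $p^{k-1}$.

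The main obstacle, and the step I would treat most carefully, is showing that these classes are pairwise disjoint and exhaust all units, so that the union has size exactly $p^{k}-p^{k-1}$ and not less. The key observation is that as $t$ runs over $\{1,\ldots,p-1\}$ the value $t^{-1}c\bmod p$ runs over all of $\{1,\ldots,p-1\}$ (inversion permutes the nonzero residues and $c\ne 0$), so the union is precisely $\{x\in\Z_{p^{k}}\mid p\nmid x\}$, of size $\varphi(p^{k})=p^{k}-p^{k-1}$. Combining the two cases gives $N(G,\alpha)\in\{0,p^{k}-p^{k-1}\}$, which with the parameters above yields the claimed $(p^{k},\,2p^{k-1}-p^{k-2},\,\{0,p^{k}-p^{k-1}\})$ ZDF. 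I would also verify the boundary case $k=2$ directly, where $p^{k-1}=p$ and the two regimes $p\nmid\alpha$ and $p^{k-1}\mid\alpha$ together exhaust all nonzero $\alpha$, to confirm no residue class is left unaccounted for.
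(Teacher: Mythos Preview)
Your proposal is correct and matches the paper's approach exactly: the paper derives this Proposition in a single line by specializing Theorem~\ref{th:construct_gzdb_on_primek2} to $s=1$, which is precisely your first paragraph. Your additional self-contained derivation is accurate and goes beyond what the paper supplies, but it is a faithful unwinding of the general template rather than a different route.
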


Here are two classes of $(n,m,S)$ ZDFs, where $n$ is not a prime power.
\begin{thms}\label{th:construct_gzdb_on_mp}
Let $n=mp$, where $m$ is a positive integer and $p$ is a prime number such that $\gcd(m,p)=1$. Suppose $s,t$ are two integers such that $st=p-1$. Let $e$ be an integer determined by the following system of equations:
\begin{equation*}
\begin{cases}
e \equiv 1  \pmod{m}  \\
e \equiv g^t \pmod{p},  \\
\end{cases}
\end{equation*}
where $g$ is a generator of $\Z_p^\times$. Let $G=\langle e \rangle$ be a subgroup of $\Z_n^\times$. Then the coset index function $f_G$ is an $(mp, m(1+t), \{0, m(s-1)\})$ ZDF.
\end{thms}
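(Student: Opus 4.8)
The plan is to apply Theorem~\ref{th:construct_gzdb_on_generic_ring}, so the work reduces to three computations: identifying $G$ together with $|G|$, counting cosets to obtain the image size $m(1+t)$, and evaluating $N(G,\alpha)$ for every nonzero $\alpha$. The organizing principle throughout is the ring isomorphism $\Z_{mp}\cong\Z_m\times\Z_p$ supplied by the Chinese Remainder Theorem (available since $\gcd(m,p)=1$), under which $e$ corresponds to the pair $(1,g^t)$. Because $g$ generates the cyclic group $\Z_p^\times$ of order $p-1=st$, the element $g^t$ has order $(p-1)/\gcd(t,p-1)=(p-1)/t=s$, so $G$ corresponds to $\{1\}\times H$, where $H=\langle g^t\rangle$ is the subgroup of $\Z_p^\times$ of order $s$; in particular $|G|=s$. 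Writing a general element as $g'=e^j$, one also records directly that every $g'\in G$ satisfies $g'\equiv 1\pmod m$, while $g'\not\equiv 1\pmod p$ unless $g'=1$.

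First I would compute the coset sizes and the image size. Under the isomorphism a coset $\alpha G$ corresponds to $\{\alpha_1\}\times(\alpha_2 H)$, where $\alpha\leftrightarrow(\alpha_1,\alpha_2)$, so $|\alpha G|=|\alpha_2 H|$, which equals $1$ when $p\mid\alpha$ (i.e.\ $\alpha_2=0$) and equals $s$ otherwise. Hence $d(G)=\{1,s\}$ with $M(G,1)=m$ (the elements with $\alpha_2=0$) and $M(G,s)=m(p-1)$, and Theorem~\ref{th:construct_gzdb_on_generic_ring} gives
$$|\Ima(f_G)|=\frac{m}{1}+\frac{m(p-1)}{s}=m+mt=m(1+t),$$
using $p-1=st$. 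Equivalently, one counts $m$ singleton cosets together with $m(p-1)/s=mt$ cosets of size $s$.

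The heart of the argument is the third parameter. By Theorem~\ref{th:construct_gzdb_on_generic_ring} it suffices to evaluate $N(G,\alpha)=|\bigcup_{g\in G}\{x\mid(g-1)x\equiv\alpha\pmod{mp}\}|$ for each nonzero $\alpha$, and the $g=1$ term is empty. For a nonidentity element $g'=e^j$ we have $m\mid g'-1$ and $p\nmid g'-1$, so $\gcd(g'-1,mp)=m$; Lemma~\ref{lm:about_solutions_on_zn} then shows the congruence $(g'-1)x\equiv\alpha\pmod{mp}$ has no solution when $m\nmid\alpha$ and exactly $m$ solutions when $m\mid\alpha$. This immediately yields $N(G,\alpha)=0$ whenever $m\nmid\alpha$. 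When $m\mid\alpha$ and $\alpha\neq0$, the hypothesis $\gcd(m,p)=1$ forces $p\nmid\alpha$, so $\alpha$ is a unit modulo $p$; by Lemma~\ref{lm:about_solutions_on_zn} the $m$ solutions attached to a fixed $g'$ are $x_0,x_0+p,\ldots,x_0+(m-1)p$, all congruent to the unique value $(g'-1)^{-1}\alpha\pmod p$.

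The main obstacle is showing that these $s-1$ solution sets (one for each nonidentity $g'$) are pairwise disjoint, so that their cardinalities add. I would argue this by comparing residues modulo $p$: for distinct $g_1',g_2'\in G\setminus\{1\}$, an equality $(g_1'-1)^{-1}\alpha\equiv(g_2'-1)^{-1}\alpha\pmod p$ would force $g_1'-1\equiv g_2'-1\pmod p$ (as $\alpha$ is a unit mod $p$), hence $g_1'\equiv g_2'\pmod p$, which by injectivity of $j\mapsto g^{tj}\bmod p$ on $\{0,\ldots,s-1\}$ gives $g_1'=g_2'$, a contradiction. Thus the solution sets occupy disjoint residue classes modulo $p$, and
$$N(G,\alpha)=(s-1)\,m=m(s-1)\qquad(m\mid\alpha,\ \alpha\neq0).$$
Combining the two cases gives $S=\{0,m(s-1)\}$, and together with the first two parameters this shows $f_G$ is an $(mp,m(1+t),\{0,m(s-1)\})$ ZDF.
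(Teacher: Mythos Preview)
Your proof is correct and follows the same four-step template the paper uses (determine $G$ and $|G|$, compute coset sizes, count cosets to get $|\Ima(f_G)|$, and evaluate $N(G,\alpha)$ via Lemma~\ref{lm:about_solutions_on_zn}). Your explicit use of the CRT isomorphism $\Z_{mp}\cong\Z_m\times\Z_p$ is a clean way to organize the computations that the paper leaves implicit.

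One point worth noting: your case split for $N(G,\alpha)$ according to whether $m\mid\alpha$ is the correct one. The paper's own sketch instead records the split as $p\mid\alpha$ versus $p\nmid\alpha$, which is inaccurate (for $m>1$ a unit $\alpha$ has $p\nmid\alpha$ but still $N(G,\alpha)=0$, since $\gcd(g'-1,mp)=m\nmid\alpha$). The resulting set $S=\{0,m(s-1)\}$ is unaffected, but your intermediate claim is the right one.
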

\begin{proof}The proof is similar with that of Theorem \ref{th:construct_gzdb_on_prime2}. We have
\begin{enumerate}[(1)]
\item $G= \{ e^i \pmod{mp}\mid i=0,1,\ldots,(s-1)\}$ and $|G|=s$;
\item for $\forall \alpha \in \Z_{n}$, it has
\begin{equation*}
|\alpha G| =\begin{cases}
1, & p \mid \alpha, \\
s, & p \nmid \alpha.
\end{cases}
\end{equation*}
\item $|\Ima(f_G)|=\frac{(p-1)m}{s}+m=m(1+t)$;
\item for $\forall \alpha \in \Z_{n}\backslash\{0\}$, it has
\begin{equation*}\begin{split}
N(G, \alpha)=&\begin{cases}
0, & p\mid \alpha, \\
m(s-1), & p \nmid \alpha.
\end{cases}
\end{split}\end{equation*}
\end{enumerate}
Finally, $f_G$ is an $(mp, m(1+t), \{0, m(s-1)\})$ ZDF.
\end{proof}

\begin{thms}\label{th:construct_gzdb_on_pp}
Let $n=p_1p_2$, where $p_1, p_2$ are two distinct prime numbers. Suppose $s_i,t_i$ are two integers such that $s_it_i=p_i-1$ ($i=1,2$). Denote $d=\gcd(s_1,s_2)$. Let $e$ be an integer determined by the following system of equations:
\begin{equation*}
\begin{cases}
e \equiv g_1^{t_1}  \pmod{p_1}  \\
e \equiv g_2^{t_2}  \pmod{p_2}  \\
\end{cases}
\end{equation*}
where $g_i$ is a generator of $\Z_{p_i}^\times$ ($i=1,2$). Let $G=\langle e \rangle$ be a subgroup of $\Z_n^\times$. Then the coset index function $f_G$ is a $(p_1p_2, 1+t_1+t_2+dt_1t_2, \{a_0,a_1,a_2\})$ ZDF, where $a_0=\frac{1}{d}(s_1s_2-s_1-s_2)+1$, $a_1=\frac{(p_1-1)s_2}{d}-p_1+s_2$, $a_2=\frac{(p_2-1)s_1}{d}-p_2+s_1$.
\end{thms}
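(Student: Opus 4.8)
The plan is to follow the same four-step template used for Theorem~\ref{th:construct_gzdb_on_prime2}, but to route every computation through the Chinese Remainder Theorem (CRT) isomorphism $\Z_n\cong\Z_{p_1}\times\Z_{p_2}$, which carries the unit group to $\Z_{p_1}^\times\times\Z_{p_2}^\times$ and identifies $e$ with the pair $(g_1^{t_1},g_2^{t_2})$. Writing $h_i=g_i^{t_i}$, the order of $h_i$ in $\Z_{p_i}^\times$ is $(p_i-1)/\gcd(t_i,p_i-1)=s_i$ because $g_i$ has order $p_i-1=s_it_i$; hence $e$ has order $\lcm(s_1,s_2)=s_1s_2/d$, so $|G|=s_1s_2/d$ and $G=\{(h_1^j,h_2^j)\mid 0\le j<s_1s_2/d\}$. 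This description is the key bookkeeping device: a power $h_i^j$ equals $1$ exactly when $s_i\mid j$, so I can split $G$ into the identity, the $s_2/d-1$ elements trivial only in the first coordinate, the $s_1/d-1$ elements trivial only in the second, and the remaining $a_0$ elements nontrivial in both coordinates (this last count being obtained by subtraction).

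For step~(2), I compute the stabiliser of $\alpha=(\alpha_1,\alpha_2)$ under multiplication by $G$: the relation $\alpha g=\alpha$ forces $\alpha_i(g_i-1)\equiv 0\pmod{p_i}$, and since each $\Z_{p_i}$ is a field this says $g_i=1$ whenever $\alpha_i\ne 0$. A short count then gives $|\alpha G|=1$ for $\alpha=0$, $|\alpha G|=s_2$ when $p_1\mid\alpha$ but $p_2\nmid\alpha$, $|\alpha G|=s_1$ in the symmetric case, and $|\alpha G|=s_1s_2/d$ when $\alpha$ is a unit. Feeding the four population counts $1,\ p_2-1,\ p_1-1,\ (p_1-1)(p_2-1)$ together with these orbit sizes into $m=\sum_{a\in d(G)}M(G,a)/a$ yields $m=1+t_2+t_1+dt_1t_2$, which is step~(3).

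The real work is step~(4), the evaluation of $N(G,\alpha)=|S(G,\alpha)|$ for $\alpha\ne 0$. Again splitting by CRT, the equation $x(g-1)\equiv\alpha$ becomes the pair $x_i(h_i^j-1)\equiv\alpha_i\pmod{p_i}$. When $\alpha$ is a unit (both $\alpha_i\ne0$) only the $g$ nontrivial in both coordinates contribute, each yielding a \emph{unit} solution $x$, and distinct such $g$ give distinct $x$ (if $x(g-g')=0$ with $x$ invertible then $g=g'$); this produces exactly $a_0$ solutions. The delicate case is $p_1\mid\alpha,\ p_2\nmid\alpha$: the second coordinate forces $h_2^j\ne1$ and $x_2=\alpha_2/(h_2^j-1)$, while the first coordinate leaves $x_1$ free when $h_1^j=1$ and pins $x_1=0$ otherwise. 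I will argue that a given nonidentity value $h_2^{j_0}$ admits some $j$ with simultaneously $h_2^j=h_2^{j_0}$ and $s_1\mid j$ precisely when $d\mid j_0$ (solvability of $j\equiv j_0\ (\mathrm{mod}\ s_2)$, $j\equiv0\ (\mathrm{mod}\ s_1)$). Hence exactly $s_2/d-1$ of the $s_2-1$ admissible values of $x_2$ carry a full free column of $p_1$ choices for $x_1$, and the remaining $s_2-s_2/d$ values contribute the single point $x_1=0$. Summing, $N(G,\alpha)=(s_2/d-1)p_1+(s_2-s_2/d)=a_1$, and the symmetric computation gives $a_2$ when $p_2\mid\alpha,\ p_1\nmid\alpha$.

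Since a nonzero $\alpha$ cannot be divisible by both $p_1$ and $p_2$, these three cases exhaust all nonzero $\alpha$, so $S=\{a_0,a_1,a_2\}$ and Theorem~\ref{th:construct_gzdb_on_generic_ring} finishes the proof. I expect the main obstacle to be the free-column bookkeeping in the mixed case: one must verify that the full columns and the isolated points sit over \emph{distinct} values of $x_2$ (so that no solution is double counted), and that the divisibility criterion $d\mid j_0$ is both necessary and sufficient. Getting this count exactly right is what pins down $a_1$ rather than a nearby incorrect value.
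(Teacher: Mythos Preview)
Your proposal is correct and follows the same four-step template the paper uses; the paper's own proof merely lists the four claimed facts and defers to the method of Theorem~\ref{th:construct_gzdb_on_prime2}, so your CRT decomposition and the free-column/divisibility bookkeeping in the mixed case are exactly the details one must supply to justify the paper's stated value of $N(G,\alpha)$. Your anticipated obstacles (injectivity of $v\mapsto\alpha_2/(v-1)$ ensuring disjoint columns, and the criterion $d\mid j_0$ via solvability of $j\equiv j_0\pmod{s_2},\ j\equiv 0\pmod{s_1}$) are precisely the points that pin down $a_1$ and $a_2$, and you have handled them correctly.
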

\begin{proof}The proof is similar with that of Theorem \ref{th:construct_gzdb_on_prime2}. Let $r=\lcm(s_1,s_2)=\frac{s_1s_2}{\gcd(s_1,s_2)}=\frac{s_1s_2}{d}$. We have
\begin{enumerate}[(1)]
\item $G= \{ e^t \pmod{p_1p_2}\mid t=0,1,\ldots,(r-1)\}$ and $|G|=r$;
\item for $\forall \alpha \in \Z_{n}$, it has
\begin{equation*}
|\alpha G| =\begin{cases}
1, & \alpha = 0, \\
s_2, & p_1 \mid \alpha, \alpha \neq 0, \\
s_1, & p_2 \mid \alpha, \alpha \neq 0, \\
r, & \gcd(\alpha, n)=1.
\end{cases}
\end{equation*}
\item $|\Ima(f_G)|=1+\frac{p_2-1}{s_2}+\frac{p_1-1}{s_1}+\frac{(p_1-1)(p_2-1)}{r}=1+t_1+t_2+dt_1t_2$;
\item for $\forall \alpha \in \Z_{n}\backslash\{0\}$, it has
\begin{equation*}\begin{split}
N(G, \alpha)=&\begin{cases}
\frac{1}{d}(s_1s_2-s_1-s_2)+1, & \gcd(\alpha, n)=1, \\
\frac{(p_1-1)s_2}{d}-p_1+s_2, & p_1\mid \alpha, \\
\frac{(p_2-1)s_1}{d}-p_2+s_1, & p_2\mid \alpha.
\end{cases}
\end{split}\end{equation*}
\end{enumerate}
Finally, $f_G$ is a $(p_1p_2, 1+t_1+t_2+dt_1t_2, \{a_0,a_1,a_2\})$ ZDF, where $a_0=\frac{1}{d}(s_1s_2-s_1-s_2)+1$, $a_1=\frac{(p_1-1)s_2}{d}-p_1+s_2$, $a_2=\frac{(p_2-1)s_1}{d}-p_2+s_1$.
\end{proof}

\subsection{Summary}

In Table \ref{tb:gzdb-new}, we summarise all the ZDFs constructed by Theorem \ref{th:construct_gzdb_on_generic_ring} with subgroup $G=\langle e \rangle$ cyclic.

\begin{table}[!htbp]
\centering
\caption{\text{$(n,m,S)$ G-ZDB Functions Constructed in This Paper}}\label{tb:gzdb-new}
\begin{tabular*}{\linewidth }{p{0.08\linewidth}|p{0.18\linewidth}|p{0.23\linewidth}|p{0.24\linewidth}|p{0.15\linewidth}}
\hline
$n$ & $m$ & $S$ &  $e$  & Conditions \\
\hline
$4$ & $3$ & $0, 2$  &  $3$ &  \\
\hline
$2^k$ & $2^{k-1}+1$ & $0, 2$   &  $2^{k-1}-1$ & $k>2$.  \\
\hline
$p^2$ & $p$ & $p$, $p^2-p+1$  &  $p-1$ & \tiny\makecell[tl]{$p$ is odd prime.}  \\
\hline
$p^k$ & $\frac{2p^{k-1}-p^{k-2}+1}{2}$ & $1$, $p$, $p^k-p^{k-1}+1$  &  $p^{k-1}-1$ & \tiny\makecell[tl]{$p$ is odd prime, \\ $k>2$.}  \\
\hline
$p^k$ & $2p^{k-1}-p^{k-2}$ & $0$, $p^{k}-p^{k-1}$  &  $p^{k-1}+1$ & \tiny\makecell[tl]{$p$ is prime, \\ $k\ge 2$.}  \\
\hline
$mp$ & $m(1+t)$ & $0$, $m(s-1)$  &
$\begin{aligned}\begin{cases}
e \equiv 1  \pmod{m}  \\
e \equiv g^t \pmod{p}
\end{cases}\end{aligned}$ & \tiny\makecell[tl]{$p$ is prime, \\ $\gcd(m,p)=1$, \\ $st=p-1$, \\ $\langle g \rangle = \Z_p^\times$.}  \\
\hline
$p_1p_2$ & \tiny$1+t_1+t_2+dt_1t_2$ & \makecell[tl]{\tiny$\frac{1}{d}(s_1s_2-s_1-s_2)+1$,  \\ \tiny$\frac{(p_1-1)s_2}{d}-p_1+s_2$, \\ \tiny$\frac{(p_2-1)s_1}{d}-p_2+s_1$} &
$\begin{aligned}\begin{cases}
e \equiv g_1^{t_1}  \pmod{p_1}  \\
e \equiv g_2^{t_2}  \pmod{p_2}
\end{cases}\end{aligned}$ & \tiny\makecell[tl]{$p_1\neq p_2$ are prime, \\ $s_it_i=p_i-1$, \\$\langle g_i \rangle = \Z_{p_i}^\times$($i=1,2$), \\ $d=\gcd(s_1,s_2)$.}  \\
\hline
\end{tabular*}
\end{table}

\section{Conclusion}\label{se:con}
In this paper, we have proposed a generic method to construct ZDF. Serval classes of  ZDFs with new parameters are constructed on $\Z_n$. Unfortunately, we find that these ZDFs cannot construct optimal objects. But these ZDFs may be valuable if more applications of ZDFs are discovered.

In the future work, we are expected to propose more ZDFs, by which we can construct optimal CWCs, DSSs and FHSs.




\end{document}